\newcommand{\Ker}{\mathrm{Ker}}
\newcommand{\St}{\mathrm{St}\,}
\newcommand{\Spec}{\mathrm{Spec}}
\newcommand{\sgn}{\mathrm{sgn}}
\def\th@plain{%
  \thm@notefont{}
  \normalfont 
}
\def\th@definition{%
  \thm@notefont{}
  \normalfont 
}
\theoremstyle{plain}
\newtheorem{theorem}{Theorem}[section]
\newtheorem{proposition}[theorem]{Proposition}
\newtheorem{lemma}[theorem]{Lemma}
\theoremstyle{definition}
\newtheorem{definition}[theorem]{Definition}
\newtheorem{remark}[theorem]{Remark}
\author{Igor Baskov}
\title{The de~Rham cohomology\\of the algebra of polynomial functions\\ on a simplicial complex}
\thanks{This research was supported by the Ministry of Science and Higher Education of the
Russian Federation, agreement 075-15-2019-1620 date 08/11/2019 and 075-15-2022-289 date 06/04/2022}
\newcommand{\Addresses}{{
  \bigskip
  \footnotesize
\noindent
  \textsc{St. Petersburg Department of Steklov Mathematical Institute\\of Russian Academy of Sciences}\par\noindent
  \textit{E-mail address}: \texttt{baskovigor@pdmi.ras.ru}
}}
\begin{document}
\maketitle
\begin{abstract}
We consider the algebra $A^0 (X)$ of polynomial functions on a simplicial complex $X$.
The algebra $A^0 (X)$ is the $0$th component of Sullivan's dg-algebra $A^\bullet (X)$ of polynomial forms on $X$.
All algebras are over an arbitrary field $k$ of characteristic $0$.

Our main interest lies in computing the de Rham cohomology of the algebra $A^0(X)$, that is, the cohomology of the universal dg-algebra $\Omega ^\bullet _{A^0(X)}$. 
There is a canonical morphism of dg-algebras $P:\Omega ^\bullet _{A^0(X)} \to A^\bullet (X)$.
We prove that $P$ is a quasi-isomorphism.
Therefore, the de Rham cohomology of the algebra $A^0 (X)$ is canonically isomorphic to the cohomology of the simplicial complex $X$ with coefficients in $k$.
Moreover, for $k=\mathbb{Q}$ the dg-algebra $\Omega ^\bullet _{A^0 (X)}$ is a model of the simplicial complex $X$ in the sense of rational homotopy theory.
Our result shows that for the algebra $A^0 (X)$ the statement of Grothendieck's comparison theorem holds.
\end{abstract}
\section{Introduction}
All algebras and dg-algebras are commutative over a field $k$ of characteristic $0$.
In \cite[Section~$7$]{sullivan1977infinitesimal} Sullivan introduces the dg-algebra $A^\bullet (X)$ of \textit{polynomial forms} on a simplicial complex $X$.
The algebra $A^0(X)$ of the degree zero elements of $A^\bullet (X)$ is the algebra of \textit{polynomial functions} on $X$. 
The cohomology of the dg-algebra $A^\bullet (X)$ is isomorphic to $H^\bullet(X,k)$.
One can ask what natural dg-algebras are weakly equivalent to $A^\bullet (X)$.
One such candidate is the universal dg-algebra $\Omega ^\bullet _{A^0 (X)}$ on the algebra $A^0 (X)$ of polynomial functions on $X$.
There is a canonical morphism of dg-algebras $P:\Omega ^\bullet _{A^0 (X)}\to A^\bullet (X)$.

The main result is Theorem~\ref{cor:mainersult}, where we prove that $P$ is a quasi-isomorphism.

In \cite{kan1976sullivan} the authors prove that the morphism $P$ is surjective and give a description of its kernel.
In \cite{felix2009combinatorial} and \cite[Appendix G(i)]{sullivan1973differential} another description of the kernel is given.
In \cite[Example~$3.8$]{gomez2002simplicial}, G{\'o}mez establishes that the morphism $P$ is \textit{not} a quasi-isomorphism, which contradicts our main result, Theorem~\ref{cor:mainersult}.
We were able to correct the erroneous computation of G{\'o}mez in Remark~\ref{rem:gomezexample}.

Grothendieck proved that for a smooth $\mathbb{C}$-algebra $A$ the cohomology groups of the algebraic de Rham complex $\Omega ^\bullet _A$ are isomorphic to the cohomology groups of the space $\mathop{\Spec} A$ with complex analytic topology, see \cite[Theorem~$1$]{grothendieck1966rham}.
The algebra $A^0 (X)$ is not smooth in general and the result of Grothendieck does not hold for general algebras, see \cite[Example~$4.4$]{arapura2011kahler}.

The result of this paper can be used in order to give another proof of the similar result for the algebra of piecewise polynomial functions on a polyhedron, which is known due to \cite[Theorem~$51$]{ib}.
\subsection*{Acknowledgement}
I would like to thank Dr. Sem\"en Podkorytov for his immense help, fruitful discussions and the idea to consider \v Cech resolution in order to prove the result.
I am grateful to the St. Petersburg Department of Steklov Mathematical Institute of Russian Academy of Sciences for their financial assistance.

\section{Simplicial complexes}
\begin{definition}
We call a set $X$ of finite non-empty subsets of a finite set $E$ a \textit{simplicial complex} if for every $v\in E$ we have $\lbrace v\rbrace \in X$ and for every $s\in X$ and every non-empty subset $s^\prime\subset s$ we have $s^\prime\in X$.
We denote by $V(X)$ the set $E$ and call its elements \textit{vertices} of $X$.
The sets $s\in X$ of cardinality $m+1$ are called $m$-simplices.
A simplicial complex $Y$ is a \textit{subcomplex} of $X$ if for every $s\in Y$ we have $s\in X$.
\end{definition}

We denote by $T_p (X)$ the set of all sequences $u=(u_0,\dots,u_p)$ of vertices of $X$ for $p\geq -1$.
We denote by $\partial _i u$ the sequence $(u_0,\dots,\widehat{u_i},\dots,u_p)$.
For a vertex $v$ we denote by $v*u$ the sequence $(v,u_0,\dots,u_p)$.
The symmetric group $\Sigma _{p+1}$ acts on $T_p(X)$.

Consider a sequence of vertices $u\in T_p(X)$.
We denote by $\St u$ the \textit{star} of $u$, that is the smallest subcomplex of $X$ containing all the simplices containing the vertices $u_i$.
If $p=-1$, we have $\St u = X$.
If the sequence $u$ spans a simplex in $X$, then $\St u$ is the star of this simplex.
If $p\geq 0$ and the sequence $u$ does not span a simplex, we have $\St u =\varnothing$.
For a subcomplex $Y$ of $X$ we denote by $\St _Y u$ the smallest subcomplex of $Y$ containing all the simplices containing the vertices $u_i$.
If $u\notin T_p(Y)$ then $\St _Y u=\varnothing$.

\section{Sullivan's dg-algebra of polynomial forms}

For a simplicial complex $X$ we define the dg-algebra $A^\bullet (X)$ following Sullivan, see \cite[Section~$7$]{sullivan1977infinitesimal},  \cite{morgan}.
For an $m$-simplex $a$ consider the dg-algebra
$$A^\bullet (a):=\dfrac{\Lambda \left( t_v, dt_v \mid \mathrm{deg} (t_v) = 0, \, \mathrm{deg} (dt_v) = 1,\, v\in a\right)}{\left( \sum _{v\in a} t_v - 1, \sum _{v\in a} dt_v \right)}$$
with the differential $t_v \mapsto dt_v$ for $v\in a$.

For a simplex $b$ such that $b\subset a$ one has a natural morphism of dg-algebras
$$|_b :A^\bullet (a)\to A^\bullet (b), \qquad
t_v\mapsto
\begin{cases}
0,\, v\notin b,\\
t_v,\, v\in b.
\end{cases}$$
Then an element $\omega = (\omega _a)_{a\in X}$ of $A^\bullet (X)$ is a collection of elements $\omega _a \in A^\bullet (a)$ such that for two simplices $b\subset a$ one has $\omega _a|_b=\omega _b$.

We call the algebra $A^0 (X)$ the \textit{algebra of polynomial functions} on $X$.
This algebra has another description as a quotient of a Stanley-Reisner algebra, see \cite{billera1989algebra}.

An inclusion of simplicial complexes $Y\subset X$ gives rise to the restriction morphism of dg-algebras
$${}|_{Y}:A^\bullet (X)\to A^\bullet (Y).$$
\begin{lemma}\label{lem:therestrictionissurjective}
The above restriction morphism is surjective.
\end{lemma}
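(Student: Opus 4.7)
The plan is to reduce, by induction on $|X \setminus Y|$, to the case of extending across a single simplex. Order $X \setminus Y = \{a_1, \ldots, a_N\}$ by nondecreasing dimension, so that for each $j$ every proper face of $a_j$ already lies in $Y_j := Y \cup \{a_1, \ldots, a_{j-1}\}$. Extending a form from $A^\bullet(Y_j)$ to $A^\bullet(Y_{j+1})$ amounts to choosing $\omega_{a_j} \in A^\bullet(a_j)$ compatible with the pre-existing $\omega_b$ for all $b \subsetneq a_j$; since $\partial a_j$ is a subcomplex of $Y_j$, this is precisely a lifting of an element of $A^\bullet(\partial a_j)$ through the restriction $A^\bullet(a_j) \to A^\bullet(\partial a_j)$.

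It thus suffices to prove that $A^\bullet(a) \to A^\bullet(\partial a)$ is surjective for a single simplex $a = \{v_0, \ldots, v_m\}$. Given $\omega_i \in A^\bullet(\partial_i a)$ compatible on overlaps, I would build $\tilde\omega^{(k)} \in A^\bullet(a)$ inductively for $k = -1, 0, \ldots, m$ with $\tilde\omega^{(k)}|_{\partial_i a} = \omega_i$ for every $i \leq k$, starting from $\tilde\omega^{(-1)} := 0$. For the step $k \mapsto k+1$, set $\eta := \omega_{k+1} - \tilde\omega^{(k)}|_{\partial_{k+1} a} \in A^\bullet(\partial_{k+1} a)$; by compatibility, $\eta$ vanishes on every $\partial_i \partial_{k+1} a$ with $i \leq k$. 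What remains is to produce $\nu \in A^\bullet(a)$ with $\nu|_{\partial_{k+1} a} = \eta$ and $\nu|_{\partial_i a} = 0$ for all $i \leq k$; then $\tilde\omega^{(k+1)} := \tilde\omega^{(k)} + \nu$ completes the step.

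To construct $\nu$, the key observation is that the kernel of the dg-algebra morphism $A^\bullet(b) \to A^\bullet(b \setminus \{w\})$ (for a vertex $w$ of a simplex $b$) is the dg-ideal generated by $t_w$, i.e.\ the graded ideal $(t_w, dt_w)$. Hence $\eta$ belongs to $\bigcap_{i \leq k}(t_{v_i}, dt_{v_i})$ in $A^\bullet(\partial_{k+1} a)$, and a separate induction on $k$ yields a decomposition
\[
\eta \;=\; \sum_{S \subseteq \{0, \ldots, k\}} \Bigl(\prod_{i \in \{0, \ldots, k\} \setminus S} t_{v_i}\Bigr) \Bigl(\prod_{i \in S} dt_{v_i}\Bigr) \eta_S, \qquad \eta_S \in A^\bullet(\partial_{k+1} a).
\]
Lifting each $\eta_S$ along the obvious surjection $A^\bullet(a) \to A^\bullet(\partial_{k+1} a)$ and forming the analogous sum in $A^\bullet(a)$ gives $\nu$: its restriction to $\partial_{k+1} a$ reproduces $\eta$, while it vanishes on each $\partial_j a$ with $j \leq k$ because every summand contains either $t_{v_j}$ or $dt_{v_j}$. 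The main technical obstacle is precisely this decomposition lemma: the relations $\sum t_{v_i} = 1$ and $\sum dt_{v_i} = 0$ block the naive free-polynomial-algebra argument, so one must verify carefully that the decomposition descends to $A^\bullet(\partial_{k+1} a)$ rather than only holding in an ambient free envelope.
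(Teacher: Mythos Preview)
The paper does not actually prove this lemma: its entire proof reads ``This fact is quite nontrivial, see \cite[Section~7]{sullivan1977infinitesimal}.'' So there is no in-paper argument to compare against; you have supplied far more than the paper does.

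Your plan is the standard one and is sound. The reduction by adding one simplex at a time is correct (each $Y_j$ is a subcomplex because you add simplices in nondecreasing dimension), and the face-by-face scheme on a single simplex, together with the identification $\ker\bigl(A^\bullet(b)\to A^\bullet(b\setminus\{w\})\bigr)=(t_w,dt_w)$, is exactly how the classical proofs (Sullivan; F\'elix--Halperin--Thomas, Lemma~10.7; Bousfield--Gugenheim) proceed. You have also located the genuine difficulty precisely: the decomposition of $\eta\in\bigcap_{i\le k}(t_{v_i},dt_{v_i})$ into the displayed sum. When $k+1<m$ this is easy --- eliminate some vertex $v_j$ with $j>k+1$ from $\partial_{k+1}a$, so that $t_{v_0},\dots,t_{v_k},dt_{v_0},\dots,dt_{v_k}$ are independent free generators, and then $\bigcap_i(t_{v_i},dt_{v_i})=\prod_i(t_{v_i},dt_{v_i})$ by a direct monomial argument. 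The delicate case is the last step $k+1=m$, where $\partial_{m}a=\{v_0,\dots,v_{m-1}\}$ and no such extra vertex is available; here the relation $\sum t_{v_i}=1$ really intervenes, and this is exactly the ``quite nontrivial'' point the paper defers to Sullivan. Your proposal is therefore a correct outline with the hard step honestly flagged rather than resolved; filling it is tantamount to reproving Sullivan's extension lemma.
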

\begin{proof}
This fact is quite nontrivial, see \cite[Section~$7$]{sullivan1977infinitesimal}.

\end{proof}

We introduce the double graded vector space $\mathcal{D} ^{p,q}$, $p,q\in \mathbb{Z}$ as follows.
For $p\leq -2$ set $\mathcal{D} ^{p,q} = 0$ and for $p\geq -1$, we define $\mathcal{D} ^{p,q}$ as the subspace of
$$\prod _{u\in T_p(X)} A ^q (\St u)$$
consisting of families of forms $\omega _u\in A ^q (\St u)$, such that for any $\sigma\in\Sigma_{p+1}$ we have
$$\omega _{\sigma u} = (\sgn \mathop \sigma) \omega _u.$$

We define the linear map
$$\delta :\mathcal{D} ^{p,q}\to \mathcal{D} ^{p+1,q}.$$
For $p\geq -1$ and for $$\omega=(\omega _u)_{u\in T_p(X)}\in\mathcal{D} ^{p,q}$$
we set the value of $\delta\omega$ on $s \in T_{p+1}(X)$ as
$$(\delta \omega )_{s} = \sum _{i=0} ^{p+1} (-1)^i \omega _{\partial _i s}|_{\St s}.$$
The differential $d$ on $A ^\bullet (\St u)$ gives rise to a differential $d$ on $\mathcal{D} ^{p,\bullet}$ for each $p$.

\begin{proposition}
The map $\delta$ is a differential on $\mathcal{D} ^{\bullet ,q}$ for each $q\in\mathbb{Z}$.
Moreover, the double graded vector space $\mathcal{D} ^{\bullet, \bullet}$ together with $\delta$ and $d$ forms a double complex in the sense that $d\,\delta = \delta\, d$.
\end{proposition}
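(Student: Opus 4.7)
The plan is to verify three statements: (a) $\delta$ maps $\mathcal{D}^{p,q}$ into the alternating subspace $\mathcal{D}^{p+1,q}$; (b) $\delta^2=0$ on each $\mathcal{D}^{\bullet,q}$; (c) $d\delta=\delta d$. Each reduces to a direct calculation with simplicial face operators, with (b) requiring the most careful sign bookkeeping.

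For (a), it suffices to check $\sgn$-equivariance under adjacent transpositions, since these generate $\Sigma_{p+2}$. If $\sigma\in\Sigma_{p+2}$ swaps positions $k$ and $k+1$ of $s\in T_{p+1}(X)$, then $\St \sigma s=\St s$ (they have the same vertex set), and for $i\notin\{k,k+1\}$ the sequence $\partial_i\sigma s$ differs from $\partial_i s$ by an adjacent transposition of the remaining entries, so $\omega_{\partial_i\sigma s}=-\omega_{\partial_i s}$ by the alternating property of $\omega$. The two remaining face operations satisfy $\partial_k\sigma s=\partial_{k+1}s$ and $\partial_{k+1}\sigma s=\partial_k s$, and the coefficients $(-1)^k$ and $(-1)^{k+1}$ get interchanged; reassembling gives $(\delta\omega)_{\sigma s}=-(\delta\omega)_s$.

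For (b), apply the definition of $\delta$ twice. Since $\St s\subseteq\St\partial_j s$, the composite restriction collapses, giving
$$(\delta^2\omega)_s=\sum_{j=0}^{p+2}\sum_{i=0}^{p+1}(-1)^{i+j}\,\omega_{\partial_i\partial_j s}\big|_{\St s}.$$
Split the index set into $\{(i,j):i<j\}$ and $\{(i,j):i\geq j\}$. Using the simplicial identity $\partial_i\partial_j=\partial_{j-1}\partial_i$ valid for $i<j$, the map $(i,j)\mapsto(j-1,i)$ is a bijection between the two sets and pairs the term with sign $(-1)^{i+j}$ at $(i,j)$ with the term carrying sign $(-1)^{(j-1)+i}=-(-1)^{i+j}$ at $(j-1,i)$, both supported on the same form $\omega_{\partial_{j-1}\partial_i s}\big|_{\St s}$. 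All terms cancel in pairs, so $\delta^2\omega=0$.

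For (c), the identity $d\delta=\delta d$ is immediate: $d$ is linear and each subcomplex restriction is a morphism of dg-algebras, hence commutes with $d$, so $d$ passes through the finite sum defining $\delta$. The main obstacle is the bijection and sign matching in (b); once one correctly identifies the map $(i,j)\mapsto (j-1,i)$ and applies the simplicial identity in the right direction, the cancellation is termwise and transparent. The check (a), while conceptually simpler, contains the same combinatorial idea in miniature and should be done first so that $\delta\omega$ is genuinely an element of $\mathcal{D}^{p+1,q}$ before iterating.
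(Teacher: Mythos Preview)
Your argument is correct. The paper states this proposition without proof, treating it as a routine verification; your write-up supplies exactly the standard \v{C}ech/simplicial-identity computation the authors omit. All three checks are sound: in (a) the adjacent-transposition argument is the right reduction (and you implicitly use $\St\sigma s=\St s$, which holds since stars depend only on the underlying vertex set); in (b) the inclusion $\St s\subseteq\St\partial_j s$ justifies collapsing the iterated restrictions, and the bijection $(i,j)\mapsto(j-1,i)$ between $\{i<j\}$ and $\{i\geq j\}$ together with $\partial_i\partial_j=\partial_{j-1}\partial_i$ gives the termwise cancellation; in (c) the fact that each restriction ${}|_{\St s}$ is a dg-algebra map is precisely what makes $d$ commute with $\delta$.
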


\begin{proposition}\label{thm:Acomplexisexact}
The complex
\begin{equation*}
\begin{tikzcd}
0\arrow[r]&\mathcal{D} ^{-1,q}\arrow[r,"\delta"]\arrow[equals]{d}&\mathcal{D} ^{0,q}\arrow[r,"\delta"]&\mathcal{D} ^{1,q}\arrow[r,"\delta"]&\dots\\
&A^q (X)&&&
\end{tikzcd}
\end{equation*}
is exact.
\end{proposition}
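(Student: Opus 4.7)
The plan is to construct an explicit contracting homotopy for the complex $(\mathcal{D}^{\bullet, q}, \delta)$, exploiting the fact that the coordinate functions $t_v \in A^0 (X)$ form a partition of unity: on every simplex $a$ of $X$ one has $\sum _{v \in a} t_v = 1$, and $t_v$ vanishes on every simplex not containing $v$. This is the analogue of the standard \v{C}ech contraction, but it must be set up carefully, because in general one does \emph{not} have $\St (v, u_0, \dots, u_{p-1}) = \St v \cap \St (u_0, \dots, u_{p-1})$; one only has the inclusion $\St (v, u_0,\dots, u_{p-1}) \subseteq \St (u_0, \dots, u_{p-1})$.

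Given $\omega \in \mathcal{D}^{p, q}$ with $p \geq 0$ and $u \in T_{p-1}(X)$, I would set
$$(h \omega)_u \;=\; \sum _{v \in V(X)} t_v \cdot \omega _{(v, u_0, \dots, u_{p-1})} \;\in\; A^q (\St u),$$
where each summand is interpreted as the zero-extension from $\St (v, u)$ up to $\St u$. The first thing to check is that this extension really defines an element of $A^q(\St u)$: on a simplex $b \in \St u$ with $v \in b$ the literal product $t_v \cdot \omega _{(v, u)}|_b$ is defined, while on simplices $b \in \St u$ with $v \notin b$ the factor $t_v$ already vanishes, so declaring the summand to be zero there is compatible with every face restriction. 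Antisymmetry of $h\omega$ in $u_0, \dots, u_{p-1}$ is inherited from the antisymmetry of $\omega$, since $v$ occupies a distinguished first slot.

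The core step is to verify $\delta h + h \delta = \Id$ on $\mathcal{D}^{p, q}$ for $p \geq 0$, and $h \delta = \Id$ on $\mathcal{D}^{-1, q} = A^q (X)$. Expanding $(\delta \omega)_{(v, s)}$ as $\omega_s|_{\St(v, s)} - \sum _{i=0}^{p} (-1)^i \omega_{(v, \partial_i s)}|_{\St(v, s)}$, multiplying by $t_v$, and summing over $v$, the leading term yields $\sum _{v \in b} t_v \cdot \omega_s|_b = \omega_s|_b$ on each simplex $b \in \St s$, so it contributes exactly $\omega _s$; the remaining summands of $h\delta\omega$ coincide termwise, with opposite sign, with the expansion $\delta h \omega_s = \sum _{i=0}^{p} (-1)^i (h\omega)_{\partial_i s}|_{\St s}$ after using $\St s \subseteq \St \partial_i s$, so they cancel. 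This homotopy yields exactness at every $\mathcal{D}^{p, q}$ with $p \geq 0$; exactness at $\mathcal{D}^{-1, q}$ amounts to the injectivity of $\omega \mapsto (\omega|_{\St v})_v$, which is immediate because every simplex lies in $\St v$ for any one of its vertices. The main obstacle in this plan is the bookkeeping around the zero-extension from $\St (v, u)$ to $\St u$, forced by the non-identity $\St (v, u) \neq \St v \cap \St u$; it is precisely the vanishing of $t_v$ outside $\St v$ that makes the extension well-defined and the cancellations go through.
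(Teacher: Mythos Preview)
Your proposal is correct and matches the paper's approach: the paper says to mimic the proof of Proposition~\ref{thm:maincomplexisexact} with the partition of unity $t_v$ in place of $\rho_v$, and your contracting homotopy $(h\omega)_u = \sum_v t_v\,\omega_{v*u}$ is exactly this. The only cosmetic difference is that the paper's template uses an arbitrary linear section $[\,-\,]$ of the restriction map (via Lemma~\ref{lem:therestrictionissurjective}) and then shows independence of the choice, whereas you build the extension by zero directly; for $A^\bullet$ these agree because $t_v$ kills any form vanishing on $\St_Y(v)$, and your observation that $v\in b\in\St u$ forces $b\in\St(v*u)$ is precisely what makes the zero-extension well defined.
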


\begin{proof}
The proof is similar to that of Proposition~\ref{thm:maincomplexisexact} below and relies on Lemma~\ref{lem:therestrictionissurjective}.
In this case one can use the partition of unity $t_v,\, v\in V(X)$, instead of $\rho _v, v\in V(X)$.
 \end{proof}

\section{The dg-algebra of de~Rham forms}

To a $k$-algebra $A$ one associates the commutative dg-algebra $\Omega ^\bullet _{A}$ (\cite[Theorem~$3.2$]{KunzKahlerDifferentials}) with $\Omega ^0 _{A} = A$.
It has the following universal property: for any dg-algebra $E^\bullet$ and any algebra homomorphism $f:A\to E^0$ there exists a unique morphism of dg-algebras $F:\Omega ^\bullet _{A}\to E^\bullet$ such that $F|_A = f$:
\begin{equation*}
\begin{tikzcd}
A\arrow[r]\arrow[rd,"f"]&\Omega ^\bullet _{A}\arrow[d,dashed,"F"]\\
&E^\bullet .
\end{tikzcd}
\end{equation*}
The elements of $\Omega ^q _{A}$ are called algebraic $q$-forms.
The dg-algebra $\Omega ^\bullet _{A}$ is covariant in the algebra $A$.
We will simply write $\Omega ^\bullet (X)$ for the dg-algebra $\Omega ^\bullet _{A^0 (X)}$.

Inclusion of simplicial complexes $Y\subset X$ gives rise to the restriction morphism of dg-algebras
$${}|_{Y}:\Omega ^\bullet (X)\to \Omega ^\bullet (Y).$$

\begin{lemma}\label{lem:kernelofinducedmapofomegas}
Suppose $A$ and $B$ are $k$-algebras and $\varphi:A\to B$ is a surjective homomorphism of algebras.
Then the induced morphism $\Omega _\varphi:\Omega ^\bullet _{A}\to\Omega ^\bullet _{B}$ is surjective and its kernel is the ideal of $\Omega ^\bullet _{A}$ generated by $\Ker \varphi$ and $d(\Ker \varphi)$.
\end{lemma}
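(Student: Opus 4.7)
The plan is to verify the two parts separately, relying on the universal property of $\Omega^\bullet_A$ (together with the fact that $\Omega^\bullet_A$ is generated, as a dg-algebra, by its degree-zero part $A$).

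For surjectivity, I would use that every element of $\Omega^q_B$ is a finite sum of expressions of the form $b_0\,db_1\wedge\cdots\wedge db_q$ with $b_i\in B$. Since $\varphi$ is surjective, I can lift each $b_i$ to some $a_i\in A$; then $\Omega_\varphi(a_0\,da_1\wedge\cdots\wedge da_q)=b_0\,db_1\wedge\cdots\wedge db_q$. Hence $\Omega_\varphi$ is surjective in every degree.

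For the kernel, write $I=\Ker\varphi$ and let $J\subseteq\Omega^\bullet_A$ be the (graded) ideal generated by $I$ and $dI$. First I would check that $J$ is stable under $d$: this follows from the Leibniz rule applied to generators, since $d(ax)=(da)x+a\,dx\in J$ and $d((da)x)=-(da)(dx)\in J$ for $a\in I$ and $x\in \Omega^\bullet_A$. Consequently $\Omega^\bullet_A/J$ inherits the structure of a commutative dg-algebra. The inclusion $J\subseteq \Ker\Omega_\varphi$ is automatic, since $\Omega_\varphi$ is a morphism of dg-algebras that kills $I$, and therefore also kills $dI=d(I)$.

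The heart of the argument is to construct an inverse. The composition $A\to\Omega^\bullet_A\to\Omega^\bullet_A/J$ vanishes on $I$ and hence factors through an algebra homomorphism $B=A/I\to(\Omega^\bullet_A/J)^0$. By the universal property of $\Omega^\bullet_B$, this extends to a morphism of dg-algebras $\psi:\Omega^\bullet_B\to\Omega^\bullet_A/J$. The induced map $\overline{\Omega_\varphi}:\Omega^\bullet_A/J\to\Omega^\bullet_B$ and $\psi$ are mutually inverse: both compositions agree with the identity on $A$ (respectively on $B$) in degree zero, so uniqueness in the universal property forces them to be the identity on the whole dg-algebra. Thus $\Ker\Omega_\varphi=J$, as required. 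The only mildly technical point — and the one I would be most careful about — is verifying that $J$ is closed under $d$ so that $\Omega^\bullet_A/J$ is well-defined as a dg-algebra; everything else is a clean application of the universal property.
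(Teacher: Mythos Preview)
Your argument is correct. The paper does not actually prove this lemma; it simply cites \cite[Lemma~6]{ib}. Your self-contained proof via the universal property is the standard one (and almost certainly what appears in the cited reference): showing that the dg-ideal $J$ generated by $I$ and $dI$ is closed under $d$, and then using the universal properties of $\Omega^\bullet_A$ and $\Omega^\bullet_B$ to exhibit an inverse to $\overline{\Omega_\varphi}$. One small remark: when you invoke ``uniqueness in the universal property'' for the composite $\psi\circ\overline{\Omega_\varphi}$ on $\Omega^\bullet_A/J$, it is cleanest to phrase this as comparing the two dg-algebra maps $\Omega^\bullet_A\to\Omega^\bullet_A/J$ (namely the projection and its composite with $\psi\circ\overline{\Omega_\varphi}$), which agree on $A$ and hence agree by the universal property of $\Omega^\bullet_A$; but the generation-by-degree-zero argument you sketch works equally well.
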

From this and Lemma~\ref{lem:therestrictionissurjective} it follows that for an inclusion of simplicial complexes $Y\subset X$ the restriction morphism ${}|_{Y}:\Omega ^\bullet (X)\to \Omega ^\bullet (Y)$ is surjective.

\begin{proof}
See \cite[Lemma~$6$]{ib}.
 \end{proof}

Let us introduce the following elements $t_v$ of $A^0 (X)$.
For a vertex $v\in V(X)$ and an $m$-simplex $a\in X$ set $(t_v)_a = 0$ if $v\notin a$ and $(t_v)_a = t_v\in A^0 (a)$ if $v\in a$. 

\begin{lemma}\label{lem:nulityofomega}
Take a simplicial complex $X$ and a subcomplex $Y\subset X$.
Suppose $\omega\in \Omega ^q (Y)$ is such that $\omega |_{\St _Y (v)} = 0$ for a vertex $v\in V(X)$.
Then $t_v ^2 |_Y \, \omega = 0$.
\end{lemma}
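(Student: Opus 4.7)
The plan is to reduce the statement to a pointwise computation in $A^0(Y)$ using the explicit description of the kernel of a surjection between universal dg-algebras provided by Lemma~\ref{lem:kernelofinducedmapofomegas}.

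First, I would apply Lemma~\ref{lem:kernelofinducedmapofomegas} to the surjection $A^0(Y)\to A^0(\St _Y v)$ (surjective by Lemma~\ref{lem:therestrictionissurjective}) to conclude that the kernel of $|_{\St _Y v}:\Omega ^\bullet (Y)\to\Omega ^\bullet (\St _Y v)$ is the ideal of $\Omega ^\bullet (Y)$ generated by $I$ and $dI$, where $I\subset A^0(Y)$ is the ideal of polynomial functions on $Y$ that vanish on $\St _Y v$. Since $\omega |_{\St _Y v}=0$, we can write
\begin{equation*}
\omega = \sum _i \alpha _i \, f_i + \sum _j \beta _j \, df_j
\end{equation*}
with $\alpha _i,\beta _j \in \Omega ^\bullet (Y)$ and $f_i,f_j \in I$. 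It therefore suffices to check that $t_v^2 |_Y \cdot f = 0$ and $t_v^2 |_Y \cdot df = 0$ for every $f\in I$.

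The main observation — and really the only content of the lemma — is the identity $t_v |_Y \cdot f = 0$ in $A^0 (Y)$ for $f\in I$. This is a simplex-by-simplex check: for a simplex $a\in Y$ not containing $v$ one has $(t_v |_Y)_a = 0$ by the definition of $t_v$; for a simplex $a\in Y$ containing $v$ one has $a \in \St _Y v$, so $f_a = 0$ because $f\in I$. In both cases the product vanishes. Squaring, $(t_v^2 |_Y)\cdot f = t_v |_Y \cdot (t_v |_Y \cdot f) = 0$, which disposes of the terms $\alpha _i f_i$.

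For the terms $\beta _j \, df_j$, I would apply $d$ to the identity $t_v |_Y \cdot f = 0$ and use the Leibniz rule in $\Omega ^\bullet (Y)$: this gives $d(t_v |_Y)\cdot f + t_v |_Y \cdot df = 0$, hence $t_v |_Y \cdot df = -d(t_v |_Y) \cdot f$. Multiplying once more by $t_v |_Y$ and using $t_v |_Y \cdot f = 0$ yields $t_v^2 |_Y \cdot df = -d(t_v |_Y)\cdot (t_v |_Y \cdot f) = 0$. Combining, $t_v^2 |_Y \cdot \omega = 0$.

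No serious obstacle is expected: the proof is entirely formal once Lemma~\ref{lem:kernelofinducedmapofomegas} reduces the question to a single multiplicative identity in $A^0(Y)$, and that identity is immediate from the combinatorial definition of $t_v$ (its support, simplex by simplex, consists of exactly the simplices that contain $v$, which is precisely where $f\in I$ vanishes).
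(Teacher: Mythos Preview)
Your proposal is correct and follows essentially the same route as the paper: reduce via Lemma~\ref{lem:kernelofinducedmapofomegas} to the two cases $\omega = f$ and $\omega = df$ with $f\in A^0(Y)$ vanishing on $\St_Y(v)$, use the identity $t_v|_Y\cdot f=0$ (which the paper asserts and you spell out simplex by simplex), and handle $df$ by Leibniz. The only cosmetic difference is that the paper treats the case $v\notin V(Y)$ separately at the outset, whereas your simplex-by-simplex check absorbs it automatically.
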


\begin{proof}
First, if $v\notin V(Y)$, then $t_v|_Y = 0$ and the claim follows.

Assume $v\in V(Y)$.
By Lemma~\ref{lem:therestrictionissurjective} and Lemma~\ref{lem:kernelofinducedmapofomegas}, the form $\omega$ lies in the dg-ideal $I$ of $\Omega ^\bullet (X)$ generated by the elements $m\in A^0(X)$ with the restriction to $\St _Y(v)$ being zero, therefore $t_v|_Y \, m=0$.
It is enough to consider the cases $\omega = m$ and $\omega = dm$.
We have $t_v ^2 |_Y \, m = 0$ and $t_v^2 |_Y \, d m = t_v |_Y \, d(t_v|_Y \, m)-t_v|_Y \, mdt_v|_Y = 0$.
 \end{proof}

We introduce the double graded vector space $\mathcal{C} ^{p,q}$, $p,q\in \mathbb{Z}$ as follows.
For $p\leq -2$ set $\mathcal{C} ^{p,q} = 0$ and for $p\geq -1$, we define $\mathcal{C} ^{p,q}$ as the subspace of
$$\prod _{u\in T_p(X)} \Omega ^q (\St u)$$
consisting of families of forms $\omega _u\in \Omega ^q (\St u)$, such that for any $\sigma\in\Sigma_{p+1}$ we have
$$\omega _{\sigma u} = (\sgn \mathop \sigma) \omega _u.$$

We define the linear map
$$\delta :\mathcal{C} ^{p,q}\to \mathcal{C} ^{p+1,q}.$$
For $p\geq -1$ and for $$\omega=(\omega _u)_{u\in T_p(X)}\in\mathcal{C} ^{p,q}$$
we set the value of $\delta\omega$ on $s \in T_{p+1}(X)$ as
$$(\delta \omega )_{s} = \sum _{i=0} ^{p+1} (-1)^i \omega _{\partial _i s}|_{\St s}.$$
The differential $d$ on $\Omega ^\bullet (\St u)$ gives rise to a differential $d$ on $\mathcal{C} ^{p,\bullet}$ for each $p$.

\begin{proposition}
The map $\delta$ is a differential on $\mathcal{C} ^{\bullet ,q}$ for each $q\in\mathbb{Z}$.
Moreover, the double graded vector space $\mathcal{C} ^{\bullet, \bullet}$ together with $\delta$ and $d$ forms a double complex in the sense that $d\,\delta = \delta\, d$.
\end{proposition}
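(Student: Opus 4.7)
The plan is to verify three purely formal claims in turn: (i) that $\delta\omega$ again satisfies the antisymmetry condition so that it actually lies in $\mathcal{C}^{p+1,q}$; (ii) that $\delta^2=0$; and (iii) that $d\delta=\delta d$. The argument is identical, term for term, to the unproved analogous statement for $\mathcal{D}^{\bullet,\bullet}$: every manipulation uses only the face maps $\partial_i$, the restriction to stars, and the universal differential $d$, all of which are available in $\Omega^\bullet$ with exactly the same formal properties as in $A^\bullet$. In particular, the natural restriction $|_{\St s}\colon \Omega^\bullet(\St u)\to\Omega^\bullet(\St s)$ (defined for any $s$ obtained from $u$ by face operations, since $\St s\subseteq \St u$) is a morphism of dg-algebras by the universal property of $\Omega^\bullet$ combined with Lemma~\ref{lem:kernelofinducedmapofomegas}.

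For (i), I would check antisymmetry on a set of generators of $\Sigma_{p+2}$, namely the adjacent transpositions $\tau_i=(i,i+1)$. For $s\in T_{p+2}(X)$ one has $\partial_j(\tau_i s)=\partial_j s$ whenever $j\in\{i,i+1\}$ (after swapping the surviving entries appropriately) and $\partial_j(\tau_i s)=\tau_i'\partial_j s$ otherwise, where $\tau_i'$ is again an adjacent transposition. Plugging these into the defining formula for $(\delta\omega)_{\tau_i s}$ and using $\omega_{\tau u}=-\omega_u$ produces an overall sign of $-1$, which is exactly $\sgn\tau_i$.

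For (ii), I would expand
\[
(\delta^2\omega)_t=\sum_{j=0}^{p+2}\sum_{i=0}^{p+1}(-1)^{i+j}\omega_{\partial_i\partial_j t}\big|_{\St t},
\]
where I have used that $\St t\subseteq \St\partial_j t$ to collapse the iterated restriction to a single restriction to $\St t$. Applying the standard simplicial identity $\partial_i\partial_j=\partial_{j-1}\partial_i$ for $i<j$ pairs up the terms with opposite signs and yields $(\delta^2\omega)_t=0$.

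For (iii), since each restriction $|_{\St s}$ commutes with $d$, one can pull $d$ inside the sum defining $\delta\omega$, giving $(d\delta\omega)_s=\sum_i(-1)^i\,d\omega_{\partial_i s}|_{\St s}=(\delta d\omega)_s$. No real obstacle is expected: the verification is formal, and the only place where the passage from $A^\bullet$ to $\Omega^\bullet$ could matter is the compatibility of the restriction morphisms with $d$, which is precisely the universal property of $\Omega^\bullet$.
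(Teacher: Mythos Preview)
Your proposal is correct and in fact supplies more than the paper does: the paper states this proposition (and its twin for $\mathcal{D}^{\bullet,\bullet}$) with no proof at all, treating both as routine formal verifications. Your three-step check---antisymmetry, $\delta^2=0$, and $d\delta=\delta d$---is precisely the standard argument one would give.

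One small slip in step~(i): it is not the case that $\partial_j(\tau_i s)=\partial_j s$ for $j\in\{i,i+1\}$. Rather, $\partial_i(\tau_i s)=\partial_{i+1}s$ and $\partial_{i+1}(\tau_i s)=\partial_i s$, so in the defining sum for $(\delta\omega)_{\tau_i s}$ these two terms are exchanged relative to $(\delta\omega)_s$; since they carry signs $(-1)^i$ and $(-1)^{i+1}$, their combined contribution picks up the required overall $-1$, matching the sign change you correctly obtain on the remaining terms via the antisymmetry of $\omega$. (Also, $s$ lies in $T_{p+1}(X)$ rather than $T_{p+2}(X)$, though the acting group is indeed $\Sigma_{p+2}$.) These are cosmetic; the argument is sound.
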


\begin{lemma}
There exist elements $p_v\in A^0(X), \, v\in V(X)$, such that
$$\sum _{v\in V(X)} p_v t_{v} ^2 =1.$$
\end{lemma}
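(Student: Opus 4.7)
The plan is to upgrade the natural partition of unity on $X$ to one whose summands carry $t_v^2$ rather than $t_v$. The first observation is that $\sum_{v\in V(X)} t_v = 1$ in $A^0(X)$: on each simplex $a\in X$, only the $t_v$ with $v\in a$ have nonzero restriction to $A^0(a)$, and these sum to $1$ by the defining relation of $A^\bullet(a)$.

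Given this identity, I would raise it to the power $N := 2|V(X)|-1$ (assuming $|V(X)|\ge 2$; if $V(X)$ is empty there is nothing to prove, and if $V(X)=\{v\}$ then $t_v=1$, so $t_v^2=1$ and $p_v=1$ works). The multinomial expansion reads
\begin{equation*}
1 = \left( \sum_{v\in V(X)} t_v \right)^{\! N} = \sum_{\substack{(k_v)_{v\in V(X)} \\ \sum_v k_v = N}} \binom{N}{(k_v)} \prod_v t_v^{k_v}.
\end{equation*}
Because $N$ strictly exceeds $|V(X)|$, the pigeonhole principle forces $k_w\ge 2$ for at least one vertex $w$ in every surviving tuple. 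Fixing once and for all a total order on $V(X)$, for each multi-index $(k_v)$ let $w(k)$ be the least vertex with $k_{w(k)}\ge 2$; factor $t_{w(k)}^2$ out of the corresponding monomial, and gather the resulting expressions according to $w(k)$. This yields the representation
\begin{equation*}
1 = \sum_{w\in V(X)} t_w^2 \, p_w, \qquad p_w \in A^0(X),
\end{equation*}
as desired, with each $p_w$ a polynomial combination of the $t_v$ with rational coefficients.

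The argument is essentially elementary and I do not anticipate a real obstacle; the only delicate point is the need to make the choice of distinguished vertex $w(k)$ canonical so that the sum defining $p_w$ is unambiguous, and to dispose of the edge cases $|V(X)|\le 1$ separately. The crucial input is simply that the partition of unity $\{t_v\}_{v\in V(X)}$ exists globally on $A^0(X)$, which is a direct consequence of how the algebra of polynomial functions is assembled from the local pieces $A^0(a)$.
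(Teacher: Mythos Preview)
Your argument is correct and is exactly the paper's approach: the paper simply says to raise the identity $\sum_{v\in V(X)} t_v = 1$ to a sufficiently high power, and you have made this precise by choosing $N = 2|V(X)|-1$ and invoking pigeonhole so that every monomial contains some $t_w^2$. The extra care with the edge cases $|V(X)|\le 1$ and the bookkeeping via a total order on $V(X)$ is harmless detail beyond what the paper spells out.
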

\begin{proof}
In $A^0(X)$ we have the equality
$$\sum _{v\in V(X)} t_{v} =1.$$
Raise both the sides to a big enough power and obtain the needed equality.
 \end{proof}

We put $\rho _v = p_v t_v ^2$.

For an inclusion of simplicial complexes $Y \subset X$ we choose a linear map, the distinguished ``extension'',
$$[- ]:\Omega ^q (Y) \to \Omega ^q (X),$$
such that $[\omega ]|_{Y}=\omega$.
Such an extension exists by Lemma~\ref{lem:therestrictionissurjective} applied to $A^0 (X)$ and Lemma~\ref{lem:kernelofinducedmapofomegas}.

\begin{lemma}\label{lem:extres}
For an inclusion of simplicial complexes $Y\subset X$ and a form $\omega\in \Omega ^q (Y)$ we have
$$\sum _{v\in V(X)} \rho _v [ \omega |_{\St _Y (v)} ]|_Y  = \omega .$$
\end{lemma}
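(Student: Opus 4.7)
The plan is to combine the partition-of-unity identity $\sum_{v\in V(X)} \rho_v = 1$ in $A^0(X)$ with the vanishing criterion of Lemma~\ref{lem:nulityofomega}. Restricting this identity to $Y$ yields $\sum_v \rho_v|_Y\cdot \omega = \omega$ in $\Omega^q(Y)$, so it suffices to check, for each vertex $v\in V(X)$, that
$$\rho_v|_Y \cdot \bigl([\omega|_{\St _Y v}]|_Y - \omega\bigr) = 0 \quad \text{in } \Omega^q(Y).$$

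Let me set $\eta_v := [\omega|_{\St _Y v}]|_Y - \omega \in \Omega^q(Y)$ and first verify that $\eta_v|_{\St _Y v} = 0$. The extension $[-]$ applied to $\omega|_{\St _Y v}\in\Omega^q(\St _Y v)$ produces an element of $\Omega^q(X)$ whose restriction back to $\St _Y v$ is $\omega|_{\St _Y v}$ by the defining property of the chosen extension; since $\St _Y v \subset Y \subset X$, the intermediate restriction to $Y$ does not alter this. Thus $\eta_v|_{\St _Y v} = \omega|_{\St _Y v} - \omega|_{\St _Y v} = 0$.

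With this vanishing in hand, Lemma~\ref{lem:nulityofomega} (applied to $Y$ and the vertex $v$) gives $t_v^2|_Y\cdot \eta_v = 0$. Multiplying by $p_v|_Y$ and using $\rho_v = p_v t_v^2$ yields $\rho_v|_Y\cdot \eta_v = 0$. Summing over $v\in V(X)$ gives $\sum_v \rho_v|_Y\,[\omega|_{\St _Y v}]|_Y = \sum_v \rho_v|_Y\,\omega = \omega$, which is the desired equality.

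The only step requiring care is the bookkeeping around the three different restriction maps (from $X$ to $Y$, from $Y$ to $\St_Y v$, and the direct restriction from $X$ to $\St_Y v$), needed to conclude that $\eta_v$ really vanishes on $\St_Y v$. Beyond Lemmas~\ref{lem:therestrictionissurjective}, \ref{lem:kernelofinducedmapofomegas}, and \ref{lem:nulityofomega}, no further algebraic input is required, and I do not expect any serious obstacle.
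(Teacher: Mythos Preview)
Your proposal is correct and follows essentially the same approach as the paper: rewrite the difference as $\sum_v \rho_v|_Y\cdot\eta_v$ using the partition of unity, verify $\eta_v|_{\St_Y v}=0$ from the defining property of the extension, and kill each summand via Lemma~\ref{lem:nulityofomega}. The paper's proof is slightly terser but structurally identical.
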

\begin{proof}
As $\sum _v \rho _v =1$ in $A^0(X)$ we have
$$\sum _{v\in V(X)} \rho _v [ \omega |_{\St _Y (v)} ]|_Y -\omega = \sum _{v\in V(X)} \rho _v |_Y\left([ \omega |_{\St _Y (v)} ]|_Y -\omega\right).$$

We have 
$$\left([ \omega |_{\St _Y (v)} ]|_Y -\omega\right)|_{\St _Y (v)}=\omega |_{\St _Y (v)}-\omega |_{\St _Y (v)}=0.$$
Hence, by Lemma~\ref{lem:nulityofomega} we have $\rho _v |_Y\left([ \omega |_{\St _Y (v)} ]|_Y -\omega\right)=0$.
 \end{proof}

\begin{proposition}\label{thm:maincomplexisexact}
The complex
\begin{equation*}
\begin{tikzcd}
0\arrow[r]&\mathcal{C} ^{-1,q}\arrow[r,"\delta"]\arrow[equals]{d}&\mathcal{C} ^{0,q}\arrow[r,"\delta"]&\mathcal{C} ^{1,q}\arrow[r,"\delta"]&\dots\\
&\Omega ^q (X)&&&.
\end{tikzcd}
\end{equation*}
is exact.
\end{proposition}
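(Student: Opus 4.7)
The plan is to construct an explicit contracting homotopy $h\colon\mathcal{C}^{p,q}\to \mathcal{C}^{p-1,q}$, paralleling the hint given for Proposition~\ref{thm:Acomplexisexact}. As suggested there, we use the partition of unity $\rho_v=p_v t_v^2$ rather than $t_v$, because the quadratic factor $t_v^2$ is precisely what activates Lemma~\ref{lem:nulityofomega}. For $p\ge 0$, $u\in T_{p-1}(X)$ and $\omega\in\mathcal{C}^{p,q}$, I would set
$$(h\omega)_u \;=\; \sum_{v\in V(X)} \rho_v|_{\St u}\cdot [\omega_{v*u}]_{\St u}\;\in\;\Omega^q(\St u),$$
where $[-]_{\St u}\colon\Omega^q(\St(v*u))\to\Omega^q(\St u)$ is any chosen extension, legitimate because $\St(v*u)=\St_{\St u}(v)\subset \St u$. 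Antisymmetry of $h\omega$ in $u$ follows from that of $\omega$, since a permutation of $u$ corresponds to the sign-preserving permutation of $v*u$ that fixes the leading $v$.

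The heart of the argument is the homotopy identity $(\delta h+h\delta)\omega=\omega$ for $p\ge 0$. Using $\partial_0(v*u)=u$ and $\partial_i(v*u)=v*\partial_{i-1}u$ for $i\ge 1$, a direct expansion collects into
\begin{align*}
(\delta h + h\delta)(\omega)_u &= \sum_v \rho_v|_{\St u}\,[\omega_u|_{\St_{\St u}(v)}]_{\St u} \\
&\quad + \sum_v\rho_v|_{\St u}\sum_{j=0}^p (-1)^j \Bigl( [\omega_{v*\partial_j u}]_{\St \partial_j u}|_{\St u} - [\omega_{v*\partial_j u}|_{\St(v*u)}]_{\St u}\Bigr).
\end{align*}
The first sum equals $\omega_u$ by Lemma~\ref{lem:extres} applied with $Y=\St u$; the specific choice of extension is immaterial, since any two extensions of $\omega_u|_{\St_{\St u}(v)}$ differ by a form vanishing on $\St_{\St u}(v)$, which is annihilated after multiplication by $\rho_v|_{\St u}$ via Lemma~\ref{lem:nulityofomega}. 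For each $j$, the bracketed difference in the second sum restricts to $\omega_{v*\partial_j u}|_{\St(v*u)}$ on $\St_{\St u}(v)=\St(v*u)$ from either side, hence vanishes there, so is killed by $t_v^2|_{\St u}$, and a fortiori by $\rho_v|_{\St u}$, again by Lemma~\ref{lem:nulityofomega}. The second sum therefore vanishes, yielding exactly $\omega_u$.

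The case $p=-1$ is handled directly: one has $(\delta\omega)_v=\omega|_{\St v}$ and $h\delta\omega=\sum_v\rho_v\,[\omega|_{\St v}]_X=\omega$ by Lemma~\ref{lem:extres} with $Y=X$, which establishes injectivity of $\delta$ on $\mathcal{C}^{-1,q}=\Omega^q(X)$.

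The main technical obstacle I anticipate is the bookkeeping of the two superficially different extensions $[\omega_{v*\partial_j u}]_{\St \partial_j u}|_{\St u}$ and $[\omega_{v*\partial_j u}|_{\St (v*u)}]_{\St u}$: the key observation is that their difference vanishes precisely on $\St_{\St u}(v)$, which is what allows Lemma~\ref{lem:nulityofomega} to do its work. This also explains why the partition of unity with the square $t_v^2$ is essential here, whereas $t_v$ alone sufficed for Sullivan's complex in Proposition~\ref{thm:Acomplexisexact}.
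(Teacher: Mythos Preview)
Your proposal is correct and follows essentially the same approach as the paper: you construct the same contracting homotopy $(h\omega)_u=\sum_v \rho_v[\omega_{v*u}]|_{\St u}$, invoke Lemma~\ref{lem:extres} (the paper's equation~(1)) to identify the main term as $\omega_u$, and use Lemma~\ref{lem:nulityofomega} (the paper's equation~(2)) to kill the extension discrepancies. The only cosmetic differences are that the paper fixes once and for all a distinguished extension $[-]\colon\Omega^q(Y)\to\Omega^q(X)$ to the ambient $X$ rather than to intermediate stars, and absorbs the $p=-1$ case into the general computation instead of treating it separately.
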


The proof follows the proof of \cite[Proposition~$8.5$]{bott1982differential}.

\begin{proof}
First, notice that
$$\St v*u = \St _{\St u} (v).$$

For $u\in T_p(X)$ and $\omega \in \Omega ^q (\St u)$ by Lemma~\ref{lem:extres} we have
\begin{equation}
\sum _{v\in V(X)} \rho _v [ \omega |_{\St v*u} ]|_{\St u} = \omega .
\end{equation}

For $\omega\in \Omega ^q (Z)$, where $Z$ is a subcomplex of $X$ such that $\St v*u\subset Z$, by Lemma~\ref{lem:nulityofomega}, we have 
\begin{equation}
\rho _v \left([\omega ] - [\omega |_{\St v*u} ]\right) |_{\St u} = 0.
\end{equation}

We construct a cochain homotopy
$$K:\mathcal{C} ^{p,q} \to \mathcal{C} ^{p-1,q}.$$

For $p\geq 0$ and $\omega\in\mathcal{C} ^{p,q}$ and $w\in T_{p-1} (X)$ put
$$(K\omega)_{w}:=\sum _{v\in V(X)} \rho _v [\omega _{v*w}]|_{\St w}.$$
By Lemma~\ref{lem:nulityofomega} this map does not depend on the choice of the distinguished extension.

Let us check that $\delta K + K\delta =1$.
For $p\geq -1$ and
$$\omega = (\omega _u )_{u\in T_p(X)}\in\mathcal{C} ^{p,q}\subset\prod _{u\in T_p(X)} \Omega ^q (\St u),$$
where $\omega _u\in \Omega ^q (\St u)$, we have
$$(\delta K \omega)_{u}=\sum _{i=0} ^p (-1)^i (K\omega)_{\partial _i u}|_{\St u} = \sum _{i=0} ^p (-1)^i \sum _{v\in V(X)} \rho _v [\omega _{v*\partial _i u}] |_{\St u},$$
and
$$(K\delta \omega)_{u} = \sum _{v\in V(X)} \rho _v [ (\delta \omega)_{v*u} ]|_{\St u} =$$
$$= \sum _{v\in V(X)} \rho _v [\, \sum _{i=0} ^{p+1} (-1)^i\omega_{\partial _i (v*u)}|_{\St v*u} ] |_{\St u} = $$
$$ = \sum _{v\in V(X)} \rho _v [\omega_{u}|_{\St v*u} ] |_{\St u}+\sum _{v\in V(X)} \rho _v [\, \sum _{i=1} ^{p+1} (-1)^i\omega_{\partial _i (v*u)}|_{\St v*u} ] |_{\St u} \overset{\text{by (1)}}{=} $$
$$ = \omega_u- \sum _{v\in V(X)} \rho _v [\, \sum _{i=0} ^p (-1)^i\omega_{v*\partial _i u}|_{\St v*u} ] |_{\St u} $$
$$ = \omega_u - \sum _{i=0} ^p (-1)^i\sum _{v\in V(X)}\rho _v [\omega_{v*\partial _i u}|_{\St v*u} ] |_{\St u}.$$

Hence,
$$((\delta K + K \delta ) \omega)_{u} = \omega_{u} + \sum _{i=0} ^p (-1)^i \sum _{v\in V(X)} \rho _v \left( [\omega _{v*\partial _i u }] - [\omega_{v*\partial _i u}|_{\St v*u} ]\right)|_{\St u}\overset{\text{by (2)}}{=}\omega_{u}.$$
 \end{proof}

\section{The morphism $P:\Omega ^\bullet (X)\to A ^\bullet (X)$}

For a simplicial complex $X$, by the universal property of $\Omega ^\bullet (X)$, there is a canonical morphism dg-algebras
$$P:\Omega ^\bullet (X)\to A ^\bullet (X),$$
which is the identity in degree $0$.

We denote by $k[0]$ the complex with the $0$th term $k$ and the others zero.
An element of $k$ gives rise to a constant function in $A^0 (X)$, hence, there are morphisms of complexes $\epsilon :k[0]\to \Omega ^\bullet (X)$ and $\bar{\epsilon} :k[0]\to A ^\bullet (X)$ such that $\bar{\epsilon} = P\circ\epsilon$.
\begin{proposition}\label{lem:Poincarelemma}
For a sequence $u\in T_p(X)$, $p\geq 0$, the commutative diagram
\begin{equation*}
\begin{tikzcd}
k[0]\arrow[r,"\epsilon"]\arrow[rd,"\bar{\epsilon}"]&\Omega ^\bullet (\St u)\arrow[d,"P"]\\
&A ^\bullet (\St u)
\end{tikzcd}
\end{equation*}
consists of quasi-isomorphisms.
\end{proposition}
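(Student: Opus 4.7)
The plan is to exhibit explicit chain homotopies contracting both $A^\bullet(\St u)$ and $\Omega^\bullet(\St u)$ to their degree-$0$ constants, modelled on a polynomial straight-line deformation of $\St u$ onto the vertex $v := u_0$, and then to deduce that $P$ is a quasi-isomorphism by two-out-of-three. If $u$ does not span a simplex then $\St u = \varnothing$ and the assertion is vacuous, so assume that $u$ spans a simplex. The crucial combinatorial fact is that $\St u$ is a simplicial cone with apex $v$: for every $s \in \St u$ one has $s \cup \{v\} \in \St u$, because $s$ sits in some simplex of $X$ containing $\{u_0,\dots,u_p\}$ and hence containing $v$.

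The main construction is a morphism of dg-algebras $\tilde h_A : A^\bullet(\St u) \to A^\bullet(\St u) \otimes A^\bullet(\Delta^1)$ built simplex-by-simplex. For each $a \in \St u$ set $a' := a \cup \{v\} \in \St u$, and let $h_a^* : A^\bullet(a') \to A^\bullet(a) \otimes A^\bullet(\Delta^1)$ be the dg-algebra map determined by $t_v \mapsto (1-s)\, t_v|_a + s$ and $t_w \mapsto (1-s)\, t_w$ for $w \in a' \setminus \{v\}$, with $s$ the barycentric coordinate on $\Delta^1$; a direct computation gives $\sum_{w \in a'} h_a^*(t_w) = 1$, so $h_a^*$ is well defined. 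I put $(\tilde h_A\, \omega)_a := h_a^*(\omega_{a'})$; the cone property together with $\omega_{a'}|_{b'} = \omega_{b'}$ for $b \subset a$ makes the family compatible under face restrictions, so $\tilde h_A$ is a well-defined morphism of dg-algebras. Its degree-zero part is an algebra map $\phi : A^0(\St u) \to A^0(\St u) \otimes k[s]$, which by the universal property of $\Omega^\bullet(\St u)$ lifts uniquely to a dg-algebra morphism $\tilde h_\Omega : \Omega^\bullet(\St u) \to \Omega^\bullet(\St u) \otimes \Omega^\bullet_{k[s]}$; the same universality forces $(P \otimes \Id) \circ \tilde h_\Omega = \tilde h_A \circ P$ after identifying $\Omega^\bullet_{k[s]} \cong A^\bullet(\Delta^1)$.

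At $s = 0$ both $\tilde h_A$ and $\tilde h_\Omega$ reduce to the identity, while at $s = 1$ they specialize to $\bar\epsilon \circ \mathrm{ev}_v$ and $\epsilon \circ \mathrm{ev}_v$ respectively, where $\mathrm{ev}_v$ denotes restriction to the $0$-simplex $\{v\}$. Composing with the fiber integration $\pi_* : (-) \otimes A^\bullet(\Delta^1) \to (-)$ given by $\alpha \otimes f(s) \mapsto 0$ and $\alpha \otimes f(s)\, ds \mapsto (-1)^{|\alpha|}\, \alpha \int_0^1 f(s)\, ds$, which satisfies the standard identity $d\pi_* + \pi_* d = \mathrm{ev}_{s=1} - \mathrm{ev}_{s=0}$, yields chain homotopies $K_A := \pi_* \circ \tilde h_A$ and $K_\Omega := \pi_* \circ \tilde h_\Omega$ witnessing $\Id \simeq \bar\epsilon \circ \mathrm{ev}_v$ and $\Id \simeq \epsilon \circ \mathrm{ev}_v$. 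Since $\mathrm{ev}_v \circ \bar\epsilon = \Id_{k[0]} = \mathrm{ev}_v \circ \epsilon$, both $\bar\epsilon$ and $\epsilon$ are quasi-isomorphisms, and the factorisation $\bar\epsilon = P \circ \epsilon$ then forces $P$ to be a quasi-isomorphism as well.

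The main obstacle is the well-definedness of $\tilde h_A$, equivalently of $\phi$: the simplexwise pullbacks $h_a^*$ must glue into a single morphism on the global algebra, and $\phi$ must respect the Stanley--Reisner-type relations presenting $A^0(\St u)$. The cone property $a \cup \{v\} \in \St u$ is used here in a decisive way: whenever a monomial $t_v\, t_{w_1} \cdots t_{w_k}$ vanishes in $A^0(\St u)$, the cone property forces $t_{w_1} \cdots t_{w_k}$ to vanish as well, and this kills the stray constant term that $\phi(t_v)$ would otherwise contribute.
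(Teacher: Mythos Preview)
Your argument is correct and takes a genuinely different route from the paper. The paper simply invokes two external Poincar\'e--type lemmas (one for $\epsilon$ from the author's companion paper, one for $\bar\epsilon$ from F\'elix--Halperin--Thomas) and concludes by two-out-of-three. You instead build the contracting homotopies by hand, via a single polynomial cone retraction of $\St u$ onto the apex $v=u_0$: simplexwise on the $A^\bullet$ side, then lifted to $\Omega^\bullet$ by the universal property. This is more self-contained and has the pleasant feature that the two homotopies are visibly compatible through $P$ (your identity $(P\otimes\Id)\circ\tilde h_\Omega=\tilde h_A\circ P$), something the citation-based argument does not exhibit. The paper's approach is shorter on the page but hides all the work in the references; yours makes the geometry explicit and treats both complexes uniformly.

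Two minor remarks. First, the case $\St u=\varnothing$ is not quite ``vacuous'': then $A^\bullet(\varnothing)=\Omega^\bullet(\varnothing)=0$, so $\epsilon$ and $\bar\epsilon$ are \emph{not} quasi-isomorphisms, although $P:0\to 0$ trivially is. This is a harmless imprecision in the wording of the proposition, shared with the paper, and irrelevant for the application to the main theorem (where only $P$ is used). Second, your closing paragraph about Stanley--Reisner relations is superfluous: since you already define $\tilde h_A$ simplexwise and verify the face compatibility $h_a^\ast(\omega_{a'})|_b = h_b^\ast(\omega_{b'})$ directly, the map $\phi$ is well-defined on $A^0(\St u)$ without any further relation-checking.
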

\begin{proof}
The map $\epsilon$ is a quasi-isomorphism by \cite[Corollary~$47$]{ib}.
The map $\bar{\epsilon}$ is a quasi-isomorphism by \cite[Theorem~$10.9$]{felix2012rational}.
Hence, the morphism $P$ is a quasi-isomorphism.
 \end{proof}

\begin{theorem}\label{cor:mainersult}
The natural map
$$P:\Omega ^\bullet (X)\to A ^\bullet(X)$$
is a quasi-isomorphism.
\end{theorem}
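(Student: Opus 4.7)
The plan is to compare $\Omega^\bullet(X)$ and $A^\bullet(X)$ through the two double complexes $\mathcal{C}^{\bullet,\bullet}$ and $\mathcal{D}^{\bullet,\bullet}$ already constructed. The morphism $P$ restricts to $P_u:\Omega^\bullet(\St u)\to A^\bullet(\St u)$ on every star and, assembled componentwise, gives a morphism of double complexes $P_\ast:\mathcal{C}^{p,q}\to \mathcal{D}^{p,q}$ for $p\geq 0$ that commutes with both $\delta$ and $d$.

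First I would pass to the total complexes $\mathrm{Tot}(\mathcal{C}^{\bullet\geq 0,\bullet})$ and $\mathrm{Tot}(\mathcal{D}^{\bullet\geq 0,\bullet})$. These are first-quadrant (the column index $p$ is nonnegative, and $q\geq 0$ since $\Omega^q=A^q=0$ for $q<0$), so each total degree is a finite sum and both natural spectral sequences converge. Filtering by rows, the first sheet is the $\delta$-cohomology of each row, which by Propositions~\ref{thm:maincomplexisexact} and~\ref{thm:Acomplexisexact} is concentrated in column $p=0$ where it equals $\Omega^q(X)$ and $A^q(X)$ respectively. Hence the augmentations $\Omega^\bullet(X)\hookrightarrow \mathrm{Tot}(\mathcal{C})$ and $A^\bullet(X)\hookrightarrow \mathrm{Tot}(\mathcal{D})$ are quasi-isomorphisms, and both are compatible with $P$ and $P_\ast$.

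Next I would run the other (column) filtration on both sides. The first sheet is now the $d$-cohomology of each column. The column $\mathcal{C}^{p,\bullet}$ is the $\sgn$-isotypic part of $\prod_{u\in T_p(X)}\Omega^\bullet(\St u)$, and since we work in characteristic $0$ with factors that are uniformly bounded below in degree $0$, $H^q_d$ commutes both with the infinite product and with the antisymmetrization projector; the analogous statement holds for $\mathcal{D}^{p,\bullet}$. For $p\geq 0$, Proposition~\ref{lem:Poincarelemma} tells us that each $P_u$ is a quasi-isomorphism, so the induced map on the first sheets of the two column spectral sequences is an isomorphism. Consequently $P_\ast:\mathrm{Tot}(\mathcal{C})\to \mathrm{Tot}(\mathcal{D})$ is a quasi-isomorphism.

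Combining these facts in the commutative square whose horizontal arrows are the augmentations and whose vertical arrows are $P$ and $P_\ast$, three of the four sides are quasi-isomorphisms, and two-out-of-three forces the remaining side $P:\Omega^\bullet(X)\to A^\bullet(X)$ to be a quasi-isomorphism as well. The main technical point to watch is the interchange of $d$-cohomology with the infinite product entering each $\mathcal{C}^{p,\bullet}$ and $\mathcal{D}^{p,\bullet}$; this is harmless because every factor is nonnegatively graded, so $H^q_d$ commutes with products. Everything else is a standard spectral sequence comparison for a morphism of first-quadrant double complexes.
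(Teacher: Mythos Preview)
Your proof is correct and follows the same route as the paper: both compare $\Omega^\bullet(X)$ and $A^\bullet(X)$ through the morphism of double complexes $\mathcal{C}^{\bullet,\bullet}\to\mathcal{D}^{\bullet,\bullet}$, invoking Propositions~\ref{thm:maincomplexisexact} and~\ref{thm:Acomplexisexact} for row exactness and Proposition~\ref{lem:Poincarelemma} for the columnwise quasi-isomorphism. The paper simply asserts the conclusion from the resulting diagram while you spell out the two spectral-sequence filtrations and the $2$-out-of-$3$ step; incidentally, since $X$ is finite the products defining $\mathcal{C}^{p,q}$ and $\mathcal{D}^{p,q}$ are finite, so your caution about commuting $H^q_d$ with infinite products is unnecessary here.
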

\begin{proof}
The morphism $P$ on stars gives rise to the maps $\pi _p:\mathcal{C} ^{p,\bullet}\to \mathcal{D} ^{p,\bullet}$ for each $p\geq 0$.
We have the following commutative diagram of non-negative complexes
\begin{equation*}
\begin{tikzcd}
0\arrow[r]&\Omega ^\bullet (X)\arrow[r,"\delta"]\arrow[d,"P"]&\mathcal{C} ^{0,\bullet}\arrow[r,"\delta"]\arrow[d,"\pi _0"]&\mathcal{C} ^{1,\bullet}\arrow[r,"\delta"]\arrow[d,"\pi _1"]&\dots\\
0\arrow[r]&A ^\bullet(X)\arrow[r,"\delta"]&\mathcal{D} ^{0,\bullet}\arrow[r,"\delta"]&\mathcal{D} ^{1,\bullet}\arrow[r,"\delta"]&\dots .
\end{tikzcd}
\end{equation*}
The vertical arrows $\pi _p$, $p\geq 0$ are quasi-isomorphisms by Proposition~\ref{lem:Poincarelemma}.
The first row is exact by Proposition~\ref{thm:maincomplexisexact}.
The second row is exact by Proposition~\ref{thm:Acomplexisexact}.

Hence, the map $P$ is also a quasi-isomorphism.
\end{proof}

\begin{remark}\label{rem:gomezexample}
As was said in the introduction, the paper \cite{gomez2002simplicial} suggests that Theorem~\ref{cor:mainersult} is false.
Namely, in \cite[Example~$3.8$]{gomez2002simplicial}, one considers the simplicial complex $X$ corresponding to the boundary of a triangle on the vertices $1$, $2$, $3$.
The dg-algebra $\Omega ^\bullet (X)$ is generated by the elements $t_1$, $t_2$, $t_3$ modulo the dg-ideal generated by $t_1+t_2+t_3-1$ and $t_1t_2t_3$.
Next, the author considers the form $t_1 ^2 t_2 ^2 dt_3$ and claims that this form is not zero.
However, this form is zero, which can be seen as follows:
applying the differential $d$ to the equality $t_1t_2t_3=0$ we get
$$t_2t_3dt_1+t_1t_3dt_2+t_1t_2dt_3=0.$$
Next, we multiply this equality by $t_1t_2$ and get
$$0=t_1t_2 ^2t_3dt_1+t_1 ^2 t_2t_3dt_2+t_1 ^2t_2 ^2dt_3 = t_1 ^2t_2 ^2dt_3.$$
\end{remark}
\bibliographystyle{siam}
\bibliography{sample}
\Addresses
\label{end}
\end{document}